\documentclass[oneside,english]{amsart}
\usepackage[T1]{fontenc}
\usepackage[latin9]{inputenc}
\usepackage{geometry}
\geometry{verbose,tmargin=2cm,bmargin=2cm,lmargin=2cm,rmargin=2cm,headheight=2cm,headsep=2cm,footskip=2cm}
\setcounter{secnumdepth}{5}
\setcounter{tocdepth}{5}
\usepackage{amstext}
\usepackage{amsthm}
\usepackage{amssymb}
\usepackage{mathdots}
\usepackage{wasysym}

\makeatletter
\numberwithin{equation}{section}
\numberwithin{figure}{section}
\theoremstyle{plain}
\newtheorem{thm}{\protect\theoremname}[section]
  \theoremstyle{plain}
  \newtheorem{prop}[thm]{\protect\propositionname}
  \theoremstyle{remark}
  \newtheorem{rem}[thm]{\protect\remarkname}
  \theoremstyle{plain}
  \newtheorem{cor}[thm]{\protect\corollaryname}
  \theoremstyle{definition}
  \newtheorem{example}[thm]{\protect\examplename}

\makeatother

\usepackage{babel}
  \providecommand{\corollaryname}{Corollary}
  \providecommand{\examplename}{Example}
  \providecommand{\propositionname}{Proposition}
  \providecommand{\remarkname}{Remark}
\providecommand{\theoremname}{Theorem}

\begin{document}

\title{On Binomial Identities in Arbitrary Bases}

\author{Lin Jiu and Christophe Vignat}

\address{Department of Mathematics, Tulane University, New Orleans, USA and
L.s.s. Supelec, Universite Paris-Sud, Orsay, France}

\email{ljiu@tulane.edu, cvignat@tulane.edu}
\begin{abstract}
We extend the digital binomial identity as given by Nguyen el al.
to an identity in an arbitrary base $b$, by introducing the $b-$ary
binomial coefficients. We then study the properties of these coefficients
such as orthogonality, a link to Lucas' theorem and the corresponding
$b-$ary Pascal triangles. 
\end{abstract}

\keywords{digital binomial identity, generalized binomial coefficients, Pascal
triangles.}

\maketitle

\section{Introduction}

In a series of recent articles, H.D. Nguyen \cite{Nguyen Generalization,Nguyen Digital}
himself and also together with T. Mansour \cite{Sheffer,Nguyen q-digital},
have introduced different versions of the binomial identity, in which
the usual integer powers are replaced by arithmetical functions that
count the digits of these powers in some base $b.$ See for example
\cite[Section 3]{Allouch-Shallit}.

Choose an integer $n$ in base $b$, then denote $S_{b}^{\left(k\right)}\left(n\right)$
the number of $k'$s in the $b-$ary expansion
\begin{equation}
n=\sum_{i}n_{i}b^{i}\label{eq:n base b}
\end{equation}
and $S_{b}\left(n\right)$ the sum of these digits, 
\begin{equation}
S_{b}\left(n\right)=\sum_{j=0}^{b-1}kS_{b}^{\left(k\right)}\left(n\right).\label{eq:Sb(n)}
\end{equation}
The main extension of the binomial identity as given by Nguyen is
for base $b=2$ and reads as follows \cite{Nguyen Generalization,Nguyen Digital}

\[
\left(X+Y\right)^{S_{2}\left(n\right)}=\sum_{\underset{\left(k,n-k\right)\text{carry-free}}{0\le k\le n}}X^{S_{2}\left(k\right)}Y^{S_{2}\left(n-k\right)},
\]
where the sum is over all values of $k$ such that the addition of
$k$ and $n-k$ in base $b$ is carry-free; remarking that this condition
is verified if and only if $S_{2}\left(k\right)+S_{2}\left(n-k\right)=S_{2}\left(n\right)$,
Nguyen's formula is also stated equivalently in the beautifully symmetric
form 
\[
\left(X+Y\right)^{S_{2}\left(n\right)}=\sum_{S_{2}\left(k\right)+S_{2}\left(n-k\right)=S_{2}\left(n\right)}X^{S_{2}\left(k\right)}Y^{S_{2}\left(n-k\right)}.
\]
This result is proven in \cite{Nguyen Digital} using a polynomial
generalization of the Sierpinski triangle, which is the Pascal triangle
modulo $2$.

An extension of this result to an arbitrary base $b$ is given in
\cite{Nguyen Generalization} in the form
\begin{equation}
\prod_{i=0}^{N-1}\binom{x+y+n_{i}-1}{n_{i}}=\sum_{0\le k\apprle_{b}n}\prod_{i=0}^{N-1}\binom{x+k_{i}-1}{k_{i}}\binom{x+n_{i}-k_{i}-1}{n_{i}-k_{i}},\label{eq:product}
\end{equation}
where the summation range $0\le k\apprle_{b}$ is over all integers
$k$ such that the digits of which satisfy $k_{i}\le n_{i},\thinspace\thinspace\text{for all}\thinspace\thinspace i\in\left\{ 0,\dots,N-1\right\} .$

The aim of this paper is to show that these results are a consequence
of an elementary identity stated in the next section. This approach
makes Nguyen's results more accessible and provides a number of generalizations.
The paper is organized as follows.

In Section 2, we state a general formula on the sum over digits of
an integer number. In Section 3, we provide a new version of the binomial
expansion in base $b$ that involves a corresponding $b-$ary binomials
coefficients; some of the properties of these coefficients such as
the link with Kummer's theorem and orthogonality are presented in
Section 4. The last section exhibits the construction rules for a
Pascal type triangle built from these coefficients.

\section{A general formula }

Formula (\ref{eq:product}) is proved by Nguyen using a polynomial
extension of Sierpinski's matrices. We show here another more elementary
approach. 

We start to remark that the formula (\ref{eq:product}) can be restated
equivalently as 
\[
\prod_{i=0}^{N-1}\frac{\left(x+y\right)_{n_{i}}}{n_{i}!}=\prod_{i=0}^{N-1}\sum_{0\le k_{i}\le n_{i}}\frac{\left(x\right)_{k_{i}}}{k_{i}!}\frac{\left(y\right)_{n_{i}-k_{i}}}{\left(n_{i}-k_{i}\right)!},
\]
where $\left(x\right)_{k}=\frac{\Gamma\left(x+k\right)}{\Gamma\left(x\right)}$
is the Pochhammer symbol. Next we realize that this identity holds
in fact component-wise, i.e. for all $i\in\left\{ 0,\dots,N-1\right\} $
and arbitrary integer $n_{i},$ it holds that
\begin{equation}
\frac{\left(x+y\right)_{n_{i}}}{n_{i}!}=\sum_{k_{i}=0}^{n_{i}}\frac{\left(x\right)_{k_{i}}}{k_{i}!}\frac{\left(y\right)_{n_{i}-k_{i}}}{\left(n_{i}-k_{i}\right)!}\label{eq:ChuVandermonde}
\end{equation}
which is actually the Chu-Vandermonde identity. Since this identity
is a consequence of the fact that the Pochhammer sequence $\left(x\right)_{k}$
is a binomial-type sequence\footnote{a sequence of polynomials $p_{n}\left(x\right)$ is of binomial type
(see \cite[p.26]{Roman}) if it satisfies the convolution identity
\[
p_{n}\left(x+y\right)=\sum_{k=0}^{n}\binom{n}{k}p_{k}\left(x\right)p_{n-k}\left(y\right).
\]
}, it suggests the following result.
\begin{prop}
Assume that $\left\{ a_{n}\right\} ,\thinspace\thinspace\left\{ b_{n}\right\} $
and $\left\{ c_{n}\right\} $ are three sequences related as
\begin{equation}
c_{n}=\sum_{k=0}^{n}\binom{n}{k}a_{k}b_{n-k},\label{eq:Convolution}
\end{equation}
then
\begin{equation}
\prod_{i=0}^{N-1}\frac{c_{n_{i}}}{n_{i}!}=\sum_{0\le k\apprle_{b}n}\prod_{i=0}^{N-1}\frac{a_{k_{i}}}{k_{i}!}\frac{b_{n_{i}-k_{i}}}{\left(n_{i}-k_{i}\right)!}.\label{eq:Product_Convolution}
\end{equation}

\end{prop}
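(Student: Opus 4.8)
The plan is to reduce the $N$-fold product identity \eqref{eq:Product_Convolution} to the single-digit case and then expand a product of finite sums by distributivity.

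First I would rewrite the hypothesis \eqref{eq:Convolution} in normalized (exponential) form. Dividing both sides by $n!$ and using the elementary identity $\frac{1}{n!}\binom{n}{k} = \frac{1}{k!\,(n-k)!}$, the convolution becomes
\[
\frac{c_n}{n!} = \sum_{k=0}^{n} \frac{a_k}{k!}\,\frac{b_{n-k}}{(n-k)!},
\]
valid for every nonnegative integer $n$. In particular it holds when $n$ is replaced by the digit $n_i$ of the base-$b$ expansion of $n$, for each $i \in \{0, \ldots, N-1\}$.

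Next I would multiply these $N$ identities together,
\[
\prod_{i=0}^{N-1} \frac{c_{n_i}}{n_i!} = \prod_{i=0}^{N-1}\left(\sum_{k_i=0}^{n_i} \frac{a_{k_i}}{k_i!}\,\frac{b_{n_i-k_i}}{(n_i-k_i)!}\right),
\]
and then expand the right-hand side using the distributive law. A product of $N$ finite sums equals the sum over all ways of choosing one summand from each factor, that is, the sum over all digit vectors $k = (k_0, \ldots, k_{N-1})$ subject to $0 \le k_i \le n_i$ for every $i$.

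The only point requiring care is the identification of this index set with the summation range appearing in \eqref{eq:Product_Convolution}. By definition the relation $0 \le k \apprle_b n$ means precisely that $k_i \le n_i$ for all $i$; combined with the uniqueness of the base-$b$ digits, the Cartesian product $\prod_i \{0, \ldots, n_i\}$ is in bijection with $\{k : 0 \le k \apprle_b n\}$. Substituting this identification yields \eqref{eq:Product_Convolution}. I expect no genuine analytic obstacle here: the whole content of the proposition is that a binomial-type convolution, once rescaled into an exponential convolution, factorizes digit by digit, so the bookkeeping of the multi-index range is the only step deserving explicit justification.
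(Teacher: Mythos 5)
Your proposal is correct and follows exactly the route the paper intends: the paper states this proposition without a formal proof, but its preceding discussion (dividing the convolution by $n!$ to get the exponential form and recognizing that the product identity holds component-wise) is precisely your argument run in reverse. Your explicit treatment of the bijection between digit vectors $\left(k_{0},\dots,k_{N-1}\right)$ with $0\le k_{i}\le n_{i}$ and the integers $k$ with $0\le k\apprle_{b}n$ is a worthwhile addition, since that identification is the only point the paper leaves entirely implicit.
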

We now apply this general formula to obtain a base $b$ generalization
of the binomial identity.

\section{A generalized Binomial Identity}
\begin{thm}
With the notations (\ref{eq:n base b}) and (\ref{eq:Sb(n)}), the
identity 
\begin{equation}
\left(X+Y\right)^{S_{b}\left(n\right)}=\sum_{k=0}^{n}{n \choose k}_{b}X^{S_{b}\left(k\right)}Y^{S_{b}\left(n-k\right)}.\label{eq:binomial}
\end{equation}
holds for all $X,Y\in\mathbb{C},$ where the $b-$ary binomial coefficients
$\binom{n}{k}_{b}$ are defined as follows. Suppose $n$ and $k$
have expansions 
\[
\begin{cases}
n=\underset{l=0}{\overset{N_{n}-1}{\sum}}n_{l}b^{l} & ,\\
k=\underset{l=0}{\overset{N_{k}-1}{\sum}}k_{l}b^{l} & ,
\end{cases}
\]
in base $b$, where $N_{n}$ and $N_{k}$ are the number of digits
of $n$ and $k$, respectively in base $b$. Then letting $N:=\max\left\{ N_{n},N_{k}\right\} $,
we have 
\begin{equation}
\binom{n}{k}_{b}=\overset{N-1}{\underset{l=0}{\prod}}{n_{l} \choose k_{l}}.\label{eq:binomial coefficients}
\end{equation}
\end{thm}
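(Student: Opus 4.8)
The plan is to derive the identity directly from the Proposition of the previous section by a judicious choice of the three sequences. Specifically, I would set $a_{n}=X^{n}$, $b_{n}=Y^{n}$, and $c_{n}=\left(X+Y\right)^{n}$. These satisfy the convolution hypothesis (\ref{eq:Convolution}) precisely because of the ordinary binomial theorem, $\left(X+Y\right)^{n}=\sum_{k=0}^{n}\binom{n}{k}X^{k}Y^{n-k}$, so the Proposition applies with no extra work.

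Substituting these sequences into (\ref{eq:Product_Convolution}) yields
\[
\prod_{i=0}^{N-1}\frac{\left(X+Y\right)^{n_{i}}}{n_{i}!}=\sum_{0\le k\apprle_{b}n}\prod_{i=0}^{N-1}\frac{X^{k_{i}}}{k_{i}!}\frac{Y^{n_{i}-k_{i}}}{\left(n_{i}-k_{i}\right)!}.
\]
On the left I would collect the powers, using that the digit sum is additive: $\prod_{i}\left(X+Y\right)^{n_{i}}=\left(X+Y\right)^{\sum_{i}n_{i}}=\left(X+Y\right)^{S_{b}\left(n\right)}$, leaving the constant factor $\prod_{i}\left(n_{i}!\right)^{-1}$. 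Multiplying both sides by $\prod_{i}n_{i}!$ then turns each factor $\frac{1}{k_{i}!\left(n_{i}-k_{i}\right)!}$ into $\binom{n_{i}}{k_{i}}$, whose product is exactly $\binom{n}{k}_{b}$ by the definition (\ref{eq:binomial coefficients}).

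The crux of the argument is the treatment of the $Y$-powers on the right. The summation range $0\le k\apprle_{b}n$ means $k_{i}\le n_{i}$ for every $i$, so the subtraction $n-k$ is carry-free and its digits are exactly $n_{i}-k_{i}$; consequently $\sum_{i}\left(n_{i}-k_{i}\right)=S_{b}\left(n-k\right)$ and $\prod_{i}Y^{n_{i}-k_{i}}=Y^{S_{b}\left(n-k\right)}$, while likewise $\prod_{i}X^{k_{i}}=X^{S_{b}\left(k\right)}$. This is the step I expect to require the most care, since the clean exponents $S_{b}\left(k\right)$ and $S_{b}\left(n-k\right)$ emerge only because the carry-free condition forbids any borrowing among the digits.

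It remains to extend the summation from the carry-free range to the full range $0\le k\le n$ appearing in (\ref{eq:binomial}). I would observe that whenever some digit satisfies $k_{l}>n_{l}$ the factor $\binom{n_{l}}{k_{l}}$ vanishes, so $\binom{n}{k}_{b}=0$ and the corresponding term contributes nothing; appending these terms therefore changes neither side. A small bookkeeping point is the digit count: for $0\le k\le n$ one always has $N_{k}\le N_{n}$, so $N=N_{n}$ throughout and padding the expansion of $k$ with leading zeros is harmless. Assembling these observations gives exactly (\ref{eq:binomial}).
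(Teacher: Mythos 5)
Your proposal is correct and follows essentially the same route as the paper: both apply the Proposition with the choice $a_{n}=X^{n}$, $b_{n}=Y^{n}$, $c_{n}=\left(X+Y\right)^{n}$ (justified by the ordinary binomial theorem), identify $\prod_{l}\binom{n_{l}}{k_{l}}$ with $\binom{n}{k}_{b}$ after clearing factorials, use the carry-free condition to read off the digits of $n-k$ and hence the exponents $S_{b}\left(k\right)$ and $S_{b}\left(n-k\right)$, and pass between the restricted range $0\le k\apprle_{b}n$ and the full range $0\le k\le n$ by noting that $\binom{n}{k}_{b}$ vanishes whenever some digit satisfies $k_{l}>n_{l}$. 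If anything, your write-up is slightly more explicit than the paper's on the factorial bookkeeping and the padding of $k$ with leading zeros.
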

\begin{proof}
It only suffices to assume that $\forall l=0$, $\cdots$, $N-1$,
$0\le k_{l}\le n_{l}$, since other cases gives ${n \choose k}_{b}=0.$
In this case, we could compute 
\[
n-k=\sum_{l=0}^{N-1}\left(n_{l}-k_{l}\right)b^{l},
\]
which is equivalent to both 
\[
S_{b}\left(k\right)+S_{b}\left(n-k\right)=S_{b}\left(n\right)
\]
and also the assumption that the addition of $k$ and $n-k$ is carry-free
in base $b$, as mentioned before.

Applying the convolution relation (\ref{eq:Product_Convolution})
to get 
\begin{equation}
\prod_{l=0}^{N-1}c_{n_{l}}=\prod_{l=0}^{N-1}n_{l}!\sum_{k_{l}=0}^{n_{l}}\frac{a_{k_{l}}}{j_{k}!}\cdot\frac{b_{n_{l}-k_{l}}}{\left(n_{l}-k_{l}\right)!}=\sum_{S_{b}\left(k\right)+S_{b}\left(n-k\right)=S_{b}\left(n\right)}\left(\prod_{l=0}^{N-1}{n_{l} \choose k_{l}}a_{k_{l}}b_{n_{l}-k_{l}}\right).\label{eq:Digits_Convolution}
\end{equation}
Now the choice 
\[
\begin{cases}
c_{n_{l}}:=\left(X+Y\right)^{n_{l}} & ,\\
a_{k_{l}}:=X^{k_{l}} & ,\\
b_{n_{l}-k_{l}}:=Y^{n_{l}-k_{l}} & ,
\end{cases}
\]
satisfies the property (\ref{eq:Convolution}); with this choice,
we obtain 
\[
\prod_{l=0}^{N-1}\left(X+Y\right)^{n_{l}}=\sum_{S_{b}\left(k\right)+S_{b}\left(n-k\right)=S_{b}\left(n\right)}\left(\prod_{l=0}^{N-1}{n_{l} \choose k_{l}}X^{k_{l}}Y^{n_{l}-k_{l}}\right).
\]
Notice that the left-hand side reads 
\[
\prod_{l=0}^{N-1}\left(X+Y\right)^{n_{l}}=\left(X+Y\right)^{\overset{N-1}{\underset{l=0}{\sum}}n_{l}}=\left(X+Y\right)^{S_{b}\left(n\right)},
\]
while the right-hand side is 
\[
\prod_{l=0}^{N-1}\sum_{k_{l}=0}^{n_{l}}{n_{l} \choose k_{l}}X^{k_{l}}Y^{n_{l}-k_{l}}=\sum_{k=0}^{n}\left(\prod_{l=0}^{N-1}{n_{l} \choose k_{l}}\right)X^{\overset{N-1}{\underset{l=0}{\sum}}k_{l}}Y^{\overset{N-1}{\underset{l=0}{\sum}}\left(n_{l}-k_{l}\right)}=\sum_{k=0}^{n}\left(\prod_{l=0}^{N-1}{n_{l} \choose k_{l}}\right)X^{S_{b}\left(k\right)}Y^{S_{b}\left(n-k\right)}.
\]
This completes the proof.\end{proof}
\begin{rem}
\label{rem:remark 3.2}When $b>n$, then in the $b$-ary expansion,
$n$ has only one digit, i.e., $n=n_{0}$. And so do $k$ and $n-k$,
since $k\le n$. Then, we have 
\[
\begin{cases}
S_{b}\left(n\right)=n_{0}=n & ,\\
S_{b}\left(k\right)=k,\ S_{b}\left(n-k\right)=n-b & ,\\
{n \choose k}_{b}={n \choose k} & ,
\end{cases}
\]
so that \ref{eq:binomial} reduces to 
\[
\left(X+Y\right)^{n}=\sum_{k=0}^{n}{n \choose k}X^{k}Y^{n-k},
\]
namely the usual binomial identity.\end{rem}
\begin{cor}
When $b=2$, i.e. in the binary case, we recover the identity 
\[
\left(X+Y\right)^{S_{2}\left(n\right)}=\sum_{S_{2}\left(k\right)+S_{2}\left(n-k\right)=S_{2}\left(n\right)}X^{S_{2}\left(k\right)}Y^{S_{2}\left(n-k\right)}
\]
since the coefficients $\binom{n}{k}_{2}$ take the value $\binom{0}{0}=\binom{1}{0}=\binom{1}{1}=1$
and $0$ otherwise.

In the case $b=3$, all $\binom{n}{k}_{3}=0$ if $S_{3}\left(k\right)+S_{3}\left(n-k\right)\ne S_{3}\left(n\right);$
otherwise, $\binom{n}{k}=1$ except for $\binom{2}{1}=2$. Therefore,
we have the equivalent expression 
\[
\left(X+Y\right)^{S_{3}\left(n\right)}=\sum_{S_{3}\left(k\right)+S_{3}\left(n-k\right)=S_{3}\left(n\right)}2^{S_{3}^{\left(2\right)}\left(n\right)-S_{3}^{\left(2\right)}\left(k\right)-S_{3}^{\left(2\right)}\left(n-k\right)}X^{S_{3}\left(k\right)}Y^{S_{3}\left(n-k\right)}.
\]
\end{cor}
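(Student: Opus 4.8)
The plan is to derive both cases as direct specializations of the master identity (\ref{eq:binomial}), the entire content residing in the evaluation of the $b$-ary coefficients $\binom{n}{k}_{b}=\prod_{l=0}^{N-1}\binom{n_{l}}{k_{l}}$ for the specific digits available in base $2$ and base $3$.

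For $b=2$, I would first observe that every digit $n_{l},k_{l}$ lies in $\{0,1\}$, so each factor $\binom{n_{l}}{k_{l}}$ is one of $\binom{0}{0},\binom{1}{0},\binom{1}{1}$, all equal to $1$, or else $\binom{0}{1}=0$. Hence the product $\binom{n}{k}_{2}$ equals $1$ exactly when $k_{l}\le n_{l}$ for every $l$, and $0$ otherwise. Since the condition $k_{l}\le n_{l}$ for all $l$ is, as already recorded in the proof of the theorem, equivalent to the carry-free condition and thus to $S_{2}(k)+S_{2}(n-k)=S_{2}(n)$, the surviving terms of (\ref{eq:binomial}) are precisely those in the stated sum, each with coefficient $1$.

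For $b=3$, the digits range over $\{0,1,2\}$, and among the admissible pairs $(n_{l},k_{l})$ with $0\le k_{l}\le n_{l}\le 2$ the only binomial coefficient different from $1$ is $\binom{2}{1}=2$. Consequently $\binom{n}{k}_{3}=2^{m}$, where $m$ is the number of positions $l$ with $n_{l}=2$ and $k_{l}=1$ (and $\binom{n}{k}_{3}=0$ when the decomposition is not carry-free). The key step is then to identify $m$ with the exponent $S_{3}^{(2)}(n)-S_{3}^{(2)}(k)-S_{3}^{(2)}(n-k)$. To do this I would classify the positions where $n_{l}=2$ according to the value of $k_{l}\in\{0,1,2\}$: writing $\alpha,\beta,\gamma$ for the numbers of such positions with $k_{l}=0,1,2$ respectively, one has $S_{3}^{(2)}(n)=\alpha+\beta+\gamma$ and $m=\beta$. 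Because $k_{l}=2$ forces $n_{l}=2$ and $(n-k)_{l}=0$, while $(n-k)_{l}=2$ forces $n_{l}=2$ and $k_{l}=0$, one reads off $S_{3}^{(2)}(k)=\gamma$ and $S_{3}^{(2)}(n-k)=\alpha$, whence $S_{3}^{(2)}(n)-S_{3}^{(2)}(k)-S_{3}^{(2)}(n-k)=\beta=m$. Substituting this value of $\binom{n}{k}_{3}$ into (\ref{eq:binomial}) yields the claimed expression.

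The main obstacle I anticipate is not any single computation but the bookkeeping in the base-$3$ case: one must check that the factor $2$ arises from exactly one digit pattern and that the three quantities $S_{3}^{(2)}(n),S_{3}^{(2)}(k),S_{3}^{(2)}(n-k)$ can each be isolated by a single digit condition under the carry-free hypothesis. Once the classification of the digit-$2$ positions is set up cleanly, the exponent identity is immediate and both parts of the corollary follow.
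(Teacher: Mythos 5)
Your proposal is correct and follows essentially the same route as the paper: both specialize the general identity (\ref{eq:binomial}) and evaluate $\binom{n}{k}_{b}$ digit by digit, observing that in base $2$ every nonzero factor equals $1$ while in base $3$ the only nontrivial factor is $\binom{2}{1}=2$. Your $\alpha,\beta,\gamma$ classification of the positions with $n_{l}=2$ additionally supplies an explicit proof of the exponent identity $S_{3}^{(2)}(n)-S_{3}^{(2)}(k)-S_{3}^{(2)}(n-k)=\#\{l:\,n_{l}=2,\ k_{l}=1\}$, a step the paper asserts without detail.
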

\begin{rem}
Different choices of $\left\{ a_{n}\right\} $, $\left\{ b_{n}\right\} $
and $\left\{ c_{n}\right\} $ lead to different identities. For instance,
another possible choice is 
\[
\begin{cases}
c_{n}=\left(X+Y\right)_{n} & ,\\
a_{n}=\left(X\right)_{n} & ,\\
b_{n}=\left(Y\right)_{n} & .
\end{cases}
\]
The generating function
\[
\sum_{n\ge0}\frac{\left(X\right)_{n}}{n!}z^{n}=\frac{1}{\left(1-z\right)^{X}}
\]
shows that the convolution property (\ref{eq:Convolution}) holds.
Then from (\ref{eq:Digits_Convolution}), we deduce 
\[
\prod_{l=0}^{N-1}\frac{\left(X+Y\right)_{n_{l}}}{n_{l}!}=\sum_{S_{b}\left(k\right)+S_{b}\left(n-k\right)=S_{b}\left(n\right)}\left(\prod_{l=0}^{N-1}\frac{X_{j_{l}}}{j_{l}!}\cdot\frac{Y_{n_{l}-j_{l}}}{\left(n_{l}-j_{l}\right)!}\right),
\]
or equivalently 
\[
\prod_{l=0}^{N-1}{X+Y+n_{l}-1 \choose n_{l}}=\sum_{0\le k\apprle_{b}n}\left[\prod_{l=0}^{N-1}{X+j_{l}-1 \choose j_{l}}{Y+n_{l}-j_{l}-1 \choose n_{l}-j_{l}}\right],
\]
which appears as \cite[Thm.2]{Nguyen Generalization}. Then, for the
binary case $b=2$, $n_{l}\in\left\{ 0,1\right\} $ yields $n_{l}!=1$
for all $l$, and from 
\[
\left(X+Y\right)_{n_{l}}=\begin{cases}
X+Y, & \text{ if }n_{l}=1\\
0, & \text{ if }n_{l}=0
\end{cases}
\]
we deduce 
\[
\left(X+Y\right)^{S_{2}\left(n\right)}=\sum_{S_{2}\left(k\right)+S_{2}\left(n-k\right)=S_{2}\left(n\right)}X^{S_{2}\left(k\right)}Y^{S_{2}\left(n-k\right)}.
\]

\end{rem}

\section{Properties of the Binomial Coefficients}

We state in this sections some properties of the generalized binomial
coefficients defined by (\ref{eq:binomial coefficients}). The first
one is a generating function for these coefficients.

\subsection{Generating function}
\begin{thm}
A generating function for the $b-$ary binomial coefficients $\binom{n}{k}_{b}$
is 
\begin{equation}
\sum_{k=0}^{n}{n \choose k}_{b}x^{k}=\prod_{l=0}^{N}\left(1+x^{b^{l}}\right)^{n_{l}}.\label{eq:b-ary binomial generating function}
\end{equation}
\end{thm}
\begin{proof}
Define the right hand side as $P\left(x\right)$, a polynomial with
degree
\[
\deg P=\overset{N-1}{\underset{l=0}{\sum}}n_{l}b^{l}=n.
\]
Denote $a_{k}^{\left(n\right)}$ the coefficient of $x^{k}$ in $P\left(x\right)$
and remark that $a_{k}^{\left(n\right)}=0$ if and only if there is
a carry in the addition of $k$ and $n-k$ in base $b.$ An elementary
enumeration shows that 
\[
a_{k}^{\left(n\right)}={n_{N-1} \choose k_{N-1}}{n_{N-2} \choose k_{N-2}}\cdots{n_{0} \choose k_{0}}
\]
which gives the desired result.
\end{proof}

\subsection{Multinomial version}

The next property is the extension of the previous result to the multinomial
case.
\begin{thm}
{[}The Multinomial Version{]} Define the multinomial coefficient 
\[
{n \choose k_{1},\cdots,k_{m}}_{b}=\prod_{l=0}^{N-1}{n_{l} \choose \left(k_{1}\right)_{l},\cdots,\left(k_{m}\right)_{l}}_{b}
\]
where $\left(k_{i}\right)_{l}$ denotes the rank-$l$ digit in the
expression of $k_{i}$ in base $b.$ 

Then 
\[
\left(X_{1}+\cdots+X_{m}\right)^{S_{b}\left(n\right)}=\sum_{k_{1},\cdots,k_{m}}{n \choose k_{1},\cdots,k_{m}}_{b}X_{1}^{S_{b}\left(k_{1}\right)}\cdots X_{m}^{S_{b}\left(k_{m}\right)}.
\]
\end{thm}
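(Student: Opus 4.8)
The plan is to mirror the proof of the binomial identity (\ref{eq:binomial}) almost verbatim, replacing the two-term expansion at each digit by the ordinary $m$-term multinomial expansion. First I would observe that each factor $\binom{n_l}{(k_1)_l,\cdots,(k_m)_l}$ vanishes unless $(k_1)_l+\cdots+(k_m)_l=n_l$, so that $\binom{n}{k_1,\cdots,k_m}_b\ne 0$ forces this equality to hold at every digit position $l$. This is precisely the carry-free condition for the addition $k_1+\cdots+k_m=n$ in base $b$, and it is equivalent to $S_b(k_1)+\cdots+S_b(k_m)=S_b(n)$. (Note also that since each $n_l<b$ is a single base-$b$ digit, the ``$b$-ary'' multinomial coefficient at position $l$ coincides with the ordinary multinomial coefficient.)

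The core computation proceeds in three steps. The elementary multinomial theorem applied digit by digit gives, for each $l$,
\[
\left(X_1+\cdots+X_m\right)^{n_l}=\sum_{(k_1)_l+\cdots+(k_m)_l=n_l}\binom{n_l}{(k_1)_l,\cdots,(k_m)_l}X_1^{(k_1)_l}\cdots X_m^{(k_m)_l}.
\]
Taking the product over $l=0,\dots,N-1$, the left-hand side collapses to $(X_1+\cdots+X_m)^{\sum_l n_l}=(X_1+\cdots+X_m)^{S_b(n)}$. On the right-hand side I would expand the product of sums into a sum of products by distributivity: a term is obtained by selecting, independently for each digit position $l$, a tuple $((k_1)_l,\dots,(k_m)_l)$ with $(k_1)_l+\cdots+(k_m)_l=n_l$. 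Each such global selection is in bijection with the $m$-tuple of integers $(k_1,\dots,k_m)$ defined by $k_i=\sum_l (k_i)_l b^l$, and for that tuple the accumulated exponent of $X_i$ is $\sum_l (k_i)_l=S_b(k_i)$ while the accumulated coefficient is $\prod_l\binom{n_l}{(k_1)_l,\cdots,(k_m)_l}=\binom{n}{k_1,\cdots,k_m}_b$. Collecting these contributions yields the stated identity.

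The only delicate point --- the analogue of the re-indexing step in the binomial proof --- is this distributivity/bijection argument: I must verify that expanding $\prod_l\sum_{\cdots}$ produces each admissible $m$-tuple $(k_1,\dots,k_m)$ exactly once and with exactly the claimed exponents and coefficient. This is where the carry-free constraint does the work, since the digit-wise requirement $(k_1)_l+\cdots+(k_m)_l=n_l$ is precisely what guarantees the $(k_i)_l$ are the genuine base-$b$ digits of $k_i$, so that $S_b(k_i)=\sum_l (k_i)_l$ holds without correction. An alternative route, if one prefers to invoke (\ref{eq:binomial}) as a black box, is induction on $m$: writing $X_1+\cdots+X_m=(X_1+\cdots+X_{m-1})+X_m$, applying (\ref{eq:binomial}) and then the induction hypothesis, the two coefficients merge through the digit-wise relation $\binom{n_l}{j_l}\binom{j_l}{(k_1)_l,\cdots,(k_{m-1})_l}=\binom{n_l}{(k_1)_l,\cdots,(k_{m-1})_l,(k_m)_l}$ with $(k_m)_l=n_l-j_l$. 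The bookkeeping there is marginally heavier, so I expect the direct product expansion to give the cleaner presentation.
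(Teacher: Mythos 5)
Your proof is correct and matches the paper's intent: the paper disposes of this theorem in one line (``straightforward from the binomial expansion (\ref{eq:binomial}) and (\ref{eq:binomial coefficients})''), and your digit-by-digit multinomial expansion is precisely the natural elaboration of that remark, mirroring the paper's own proof of the binomial case. Your attention to the carry-free condition guaranteeing that the $(k_i)_l$ are the genuine base-$b$ digits of $k_i$ is exactly the point the paper's binomial proof relies on, so nothing further is needed.
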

\begin{proof}
The proof is straightforward from the binomial expansion (\ref{eq:binomial})
and (\ref{eq:binomial coefficients}).
\end{proof}

\subsection{Symmetries}
\begin{thm}
The $b-$ary binomial coefficients satisfy\end{thm}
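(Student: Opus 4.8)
The plan is to deduce the symmetry directly from the product formula \eqref{eq:binomial coefficients} by reducing it to the classical digit-wise identity $\binom{m}{j}=\binom{m}{m-j}$. The statement I expect here is $\binom{n}{k}_{b}=\binom{n}{n-k}_{b}$, and I would prove it by separating the two cases according to whether or not the addition of $k$ and $n-k$ is carry-free in base $b$.

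First I would pin down the support of these coefficients. By \eqref{eq:binomial coefficients}, $\binom{n}{k}_{b}=\prod_{l=0}^{N-1}\binom{n_{l}}{k_{l}}$ is nonzero precisely when $k_{l}\le n_{l}$ for every $l$, since an ordinary binomial coefficient $\binom{n_{l}}{k_{l}}$ vanishes as soon as $k_{l}>n_{l}$. As already observed in the proof of the main theorem, the condition $k_{l}\le n_{l}$ for all $l$ is exactly the carry-free condition for $k+(n-k)$, under which the digits of $n-k$ are $(n-k)_{l}=n_{l}-k_{l}$. Because this condition is symmetric in $k$ and $n-k$, it follows that $\binom{n}{k}_{b}\ne 0$ if and only if $\binom{n}{n-k}_{b}\ne 0$, so both sides have the same support.

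On that common support I would then apply the elementary symmetry factor by factor. Using $(n-k)_{l}=n_{l}-k_{l}$ together with $\binom{n_{l}}{n_{l}-k_{l}}=\binom{n_{l}}{k_{l}}$,
\[
\binom{n}{n-k}_{b}=\prod_{l=0}^{N-1}\binom{n_{l}}{(n-k)_{l}}=\prod_{l=0}^{N-1}\binom{n_{l}}{n_{l}-k_{l}}=\prod_{l=0}^{N-1}\binom{n_{l}}{k_{l}}=\binom{n}{k}_{b},
\]
which is the claimed identity. In the complementary case, where the addition is not carry-free, at least one factor $\binom{n_{l}}{k_{l}}$ vanishes, so both $\binom{n}{k}_{b}$ and $\binom{n}{n-k}_{b}$ are zero and the identity holds trivially.

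I do not anticipate a genuine obstacle; the only point requiring care is the bookkeeping in the non-carry-free case, where the digits of $n-k$ need \emph{not} equal $n_{l}-k_{l}$ because of borrows, but this is harmless since both products already contain a zero factor and need not be compared term by term. Should the theorem bundle further symmetries—for instance a digit complementation $n_{l}\mapsto (b-1)-n_{l}$—each would again be verified factor by factor against the matching ordinary binomial identity, so the same digit-wise strategy applies uniformly.
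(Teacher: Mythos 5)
Your treatment of the symmetry property is correct and essentially coincides with the paper's own argument: the paper deduces $\binom{n}{k}_{b}=\binom{n}{n-k}_{b}$ from the invariance $k_{l}\mapsto n_{l}-k_{l}$ of each factor in \eqref{eq:binomial coefficients}, and your case split (carry-free versus not, with both sides vanishing in the latter case) is a careful and valid way of making that one-line claim rigorous.

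However, the theorem bundles a second enumerated assertion that your proposal never addresses: the Pascal-type recurrence
\[
\binom{n}{k}_{b}=\binom{n-b}{k-b}_{b}+\binom{n-b}{k}_{b},
\]
and this omission is a genuine gap. Your closing hedge about ``further symmetries'' handled factor by factor does not cover it, because this identity is not a digit-wise rewriting of a single product: it is an additive relation among three \emph{different} $b$-ary coefficients. The paper proves it by noting that, when the relevant digits of $n$ and $k$ are positive, the coefficients $\binom{n-b}{k-b}_{b}$ and $\binom{n-b}{k}_{b}$ share all digit factors with $\binom{n}{k}_{b}$ except the one at a single rank, where the classical Pascal rule
\[
\binom{n_{0}-1}{k_{0}-1}+\binom{n_{0}-1}{k_{0}}=\binom{n_{0}}{k_{0}}
\]
applies; factoring out the common product then collapses the sum to $\binom{n}{k}_{b}$, and the degenerate cases where a digit equals zero are checked separately. (A point requiring care here, which the paper itself glosses over: one must track exactly which digit is decremented when $b$ is subtracted, since $n-b$ alters the rank-one digit rather than the rank-zero digit, so the stated proof only goes through after this bookkeeping is fixed.) Nothing in your digit-complementation strategy produces an identity of this additive shape, so this half of the theorem remains unproven in your proposal.
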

\begin{enumerate}
\item the symmetry property
\[
\binom{n}{k}_{b}=\binom{n}{n-k}_{b}
\]

\item the recurrence 
\[
\binom{n}{k}_{b}={n-b \choose k-b}_{b}+{n-b \choose k}_{b}.
\]
\end{enumerate}
\begin{proof}
The symmetry property is easily deduced from the definition (\ref{eq:binomial coefficients})
and the invariance $k_{l}\mapsto n_{l}-k_{l}$ of each term.

For the recurrence property, assume that each $k$ and $n$ have a
non-zero rank first digit, i.e. $k_{0}>0,\ n_{0}>0$. Then, $\left(n-b\right)_{0}=n_{0}-1$
and $\left(k-b\right)_{0}=k_{0}-1$ and
\[
\binom{n-b}{k-b}_{b}=\binom{n_{N-1}}{k_{N-1}}\cdots\binom{n_{0}-1}{k_{0}-1},\thinspace\thinspace\binom{n-b}{k}_{b}=\binom{n_{N-1}}{k_{N-1}}\cdots\binom{n_{0}-1}{k_{0}}
\]
and we deduce
\[
\binom{n-b}{k-b}_{b}+\binom{n-b}{k}_{b}=\binom{n_{N-1}}{k_{N-1}}\dots\left[\binom{n_{0}-1}{k_{0}-1}+\binom{n_{0}-1}{k_{0}}\right]=\binom{n_{N-1}}{k_{N-1}}\cdots\binom{n_{0}}{k_{0}}=\binom{n}{k}_{b}.
\]

This extends easily to the case where one or two of the numbers $\left(k_{0},n_{0}\right)$
is equal to $0$.
\end{proof}

\subsection{Link with Lucas' theorem}

The definition (\ref{eq:binomial coefficients}) will look familiar
to those readers who have already met Lucas' famous theorem which
we restate here (see also \cite{Fine_Binomial Modulo Prime p})
\begin{thm}
{[}Lucas{]} For $p$ a prime number, the binomial coefficients satisfy
\[
\binom{n}{k}\equiv\binom{n_{N}}{k_{N}}\dots\binom{n_{0}}{k_{0}}\mod p.
\]

\end{thm}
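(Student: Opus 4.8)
The plan is to deduce Lucas' theorem directly from the generating function (\ref{eq:b-ary binomial generating function}), specialized to the base $b=p$, together with the Frobenius congruence in characteristic $p$. First I would recall the ``freshman's dream'' in the polynomial ring $\mathbb{F}_{p}[x]$: since every intermediate binomial coefficient $\binom{p}{j}$ with $0<j<p$ is divisible by the prime $p$, one has $(1+x)^{p}\equiv 1+x^{p}\pmod p$, and iterating the Frobenius map $x\mapsto x^{p}$ gives $(1+x)^{p^{l}}\equiv 1+x^{p^{l}}\pmod p$ for every $l$.

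Next, writing $n=\sum_{l=0}^{N}n_{l}p^{l}$ in base $p$ and splitting the power accordingly, I would compute modulo $p$:
\[
(1+x)^{n}=\prod_{l=0}^{N}\left((1+x)^{p^{l}}\right)^{n_{l}}\equiv\prod_{l=0}^{N}\left(1+x^{p^{l}}\right)^{n_{l}}\pmod p.
\]
The left-hand side is the ordinary binomial generating function, whose coefficient of $x^{k}$ is $\binom{n}{k}$. The right-hand side is exactly the generating function (\ref{eq:b-ary binomial generating function}) for the $p$-ary binomial coefficients, whose coefficient of $x^{k}$ is $\binom{n}{k}_{p}=\prod_{l}\binom{n_{l}}{k_{l}}$.

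Finally I would equate the coefficients of $x^{k}$ on both sides modulo $p$. The only point requiring care is that the coefficient extraction on the right is unambiguous: each digit $k_{l}$ satisfies $0\le k_{l}\le n_{l}<p$, so the exponent $\sum_{l}k_{l}p^{l}=k$ is genuinely the base-$p$ expansion of $k$, which is unique. Hence $\prod_{l}\binom{n_{l}}{k_{l}}$ is the complete coefficient of $x^{k}$, and the congruence $\binom{n}{k}\equiv\prod_{l=0}^{N}\binom{n_{l}}{k_{l}}\pmod p$ follows. The case in which some $k_{l}>n_{l}$ is handled automatically, since then $\binom{n_{l}}{k_{l}}=0$ and that term contributes nothing, matching $\binom{n}{k}\equiv 0$.

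The main obstacle is genuinely the uniqueness-of-digits bookkeeping in this coefficient comparison; the algebraic heart of the argument, namely the Frobenius congruence, is elementary once the earlier generating function identity is invoked. It is worth emphasizing that this derivation exhibits Lucas' theorem as the ``mod $p$'' shadow of the exact $p$-ary factorization (\ref{eq:binomial coefficients}): the $b$-ary binomial coefficient $\binom{n}{k}_{p}$ coincides with $\binom{n}{k}$ modulo $p$, which is precisely why the definition (\ref{eq:binomial coefficients}) looks familiar.
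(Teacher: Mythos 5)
Your proof is correct, but there is nothing in the paper to compare it against: the paper does not prove Lucas' theorem at all. It quotes it as a known classical result, with a citation to Fine's article \cite{Fine_Binomial Modulo Prime p}, and then merely rephrases it in the paper's notation as $\binom{n}{k}\equiv\binom{n}{k}_{p}\bmod p$. What your argument supplies is a self-contained proof of exactly that restatement, and it does so using the paper's own machinery: the generating function (\ref{eq:b-ary binomial generating function}) of Section 4.1 identifies the coefficient of $x^{k}$ in $\prod_{l}\left(1+x^{p^{l}}\right)^{n_{l}}$ as $\binom{n}{k}_{p}=\prod_{l}\binom{n_{l}}{k_{l}}$, while your Frobenius step $(1+x)^{p^{l}}\equiv1+x^{p^{l}}\pmod{p}$ identifies that product with $(1+x)^{n}$ modulo $p$. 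The digit-uniqueness point you single out is indeed the only delicate step in the coefficient comparison, and you handle it correctly, including the degenerate case $k_{l}>n_{l}$, where the coefficient on the right is absent and $\binom{n_{l}}{k_{l}}=0$, so both sides vanish mod $p$. This is essentially the classical generating-function proof of Lucas' theorem (close in spirit to the one in Fine's cited paper), and incorporating it would turn the remark of Section 4.4 --- that the coefficients $\binom{n}{k}_{p}$ are congruent to, but not equal to, the usual $\binom{n}{k}$ modulo $p$ --- into a corollary of the paper's own Theorem 4.1 rather than an appeal to the literature.
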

Under the same condition, this theorem is concisely rephrased in our
notations as
\[
\binom{n}{k}\equiv\binom{n}{k}_{p}\mod p.
\]

\begin{rem}
The Sierpinski matrix that is used by Nguyen et al. to prove their
results contains the coefficients $\binom{n}{k}\mod p$, or equivalently
$\binom{n}{k}_{p}\mod p$, which do not coincide with the coefficients
$\binom{n}{k}_{p}$ studied here, but are congruent to them.
\end{rem}

\subsection{Orthogonality Relations}

There are many elementary identities involving the usual binomial
coefficients that can be transferred to the case of the $b-$ary binomial
coefficients. Here, we only show one as an example. 
\begin{example}
If two sequences $\left\{ a_{n}\right\} $ and $\left\{ c_{n}\right\} $
are related as 
\[
a_{n}=\overset{n}{\underset{k=0}{\sum}}\left(-1\right)^{k}{n \choose k}c_{n}
\]
then 
\[
c_{n}=\overset{n}{\underset{k=0}{\sum}}\left(-1\right)^{k}{n \choose k}a_{k};
\]
these identities are know as inverse relations. Note that they are
equivalent to the orthogonality conditions 
\[
\sum_{k=j}^{n}{n \choose k}{k \choose j}\left(-1\right)^{k+j}=\delta_{n,j}=\begin{cases}
1 & j=n\\
0 & \text{ otherwise}
\end{cases}.
\]
The generalization to the $b-$ary case is as follows. \end{example}
\begin{thm}
The $b-$ary binomial coefficients satisfy the orthogonality relations
\begin{equation}
\sum_{k=j}^{n}{n \choose k}_{b}{k \choose j}_{b}\left(-1\right)^{S_{b}\left(k\right)+S_{b}\left(j\right)}=\delta_{n,j},\label{eq:b-binomial-inverse-relation}
\end{equation}
namely
\[
a_{S_{b}\left(n\right)}=\overset{n}{\underset{k=0}{\sum}}\left(-1\right)^{S_{b}\left(k\right)}{n \choose k}_{b}c_{S_{b}\left(n\right)}\Rightarrow c_{S_{b}\left(n\right)}=\overset{n}{\underset{k=0}{\sum}}\left(-1\right)^{S_{b}\left(k\right)}{n \choose k}_{b}a_{S_{b}\left(n\right)}.
\]
\end{thm}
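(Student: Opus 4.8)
The plan is to exploit the product structure of the $b$-ary binomial coefficients recorded in (\ref{eq:binomial coefficients}), which turns the single sum over $k$ into a product of $N$ independent single-digit sums, each of which collapses by the \emph{classical} orthogonality relation recalled in the preceding Example. Concretely, I would first fix $N$ large enough to contain all the digits of $n$, $k$ and $j$, and observe that by (\ref{eq:binomial coefficients}) a summand $\binom{n}{k}_{b}\binom{k}{j}_{b}$ is nonzero only when $j_{l}\le k_{l}\le n_{l}$ for every $l$: indeed $\binom{n}{k}_{b}=\prod_{l}\binom{n_{l}}{k_{l}}$ vanishes unless $k_{l}\le n_{l}$, and $\binom{k}{j}_{b}=\prod_{l}\binom{k_{l}}{j_{l}}$ vanishes unless $j_{l}\le k_{l}$.

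Next I would factor the sign. Using $S_{b}(m)=\sum_{l}m_{l}$ gives
\[
\left(-1\right)^{S_{b}\left(k\right)+S_{b}\left(j\right)}=\prod_{l=0}^{N-1}\left(-1\right)^{k_{l}+j_{l}},
\]
so that the full summand can be written as $\prod_{l=0}^{N-1}\binom{n_{l}}{k_{l}}\binom{k_{l}}{j_{l}}\left(-1\right)^{k_{l}+j_{l}}$. The decisive step is then to reindex the outer sum. Since the contributing $k$ are exactly those with $j_{l}\le k_{l}\le n_{l}$ for all $l$ (equivalently, both additions $j+(k-j)$ and $k+(n-k)$ are carry-free in base $b$), and each such $k$ corresponds bijectively to its digit tuple $\left(k_{0},\dots,k_{N-1}\right)$, summing over the contributing $k$ amounts to letting each digit $k_{l}$ run independently over $j_{l}\le k_{l}\le n_{l}$. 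This permits the interchange of sum and product,
\[
\sum_{k=j}^{n}\binom{n}{k}_{b}\binom{k}{j}_{b}\left(-1\right)^{S_{b}\left(k\right)+S_{b}\left(j\right)}=\prod_{l=0}^{N-1}\left(\sum_{k_{l}=j_{l}}^{n_{l}}\binom{n_{l}}{k_{l}}\binom{k_{l}}{j_{l}}\left(-1\right)^{k_{l}+j_{l}}\right).
\]

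Finally I would apply the ordinary orthogonality relation from the Example to each factor, obtaining $\sum_{k_{l}=j_{l}}^{n_{l}}\binom{n_{l}}{k_{l}}\binom{k_{l}}{j_{l}}\left(-1\right)^{k_{l}+j_{l}}=\delta_{n_{l},j_{l}}$, so that the product equals $\prod_{l=0}^{N-1}\delta_{n_{l},j_{l}}$, which is $1$ precisely when $n_{l}=j_{l}$ for all $l$, i.e. when $n=j$, and $0$ otherwise; this is exactly $\delta_{n,j}$. The inverse-relation statement then follows in the standard way by substituting one defining relation into the other and invoking (\ref{eq:b-binomial-inverse-relation}). I expect the main obstacle to be the justification of the factorization of the outer sum: one must verify that the constraint $j\le k\le n$ together with the vanishing of the $b$-ary coefficients off the digit-wise ranges is captured \emph{exactly} by letting each digit $k_{l}$ vary independently in $[j_{l},n_{l}]$, with no carries coupling the digits. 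Once this bijection between contributing $k$ and admissible digit tuples is secured, the remainder is a routine digit-by-digit application of the classical identity.
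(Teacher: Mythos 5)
Your proposal is correct and follows essentially the same route as the paper: factor the coefficients and the sign digit-by-digit via (\ref{eq:binomial coefficients}), interchange the sum over $k$ with the product over digits, and apply the classical orthogonality relation to each factor to obtain $\prod_{l}\delta_{n_{l},j_{l}}=\delta_{n,j}$. In fact you are more careful than the paper on the one nontrivial point, namely the justification that summing over $j\le k\le n$ factors exactly into independent digit sums over $j_{l}\le k_{l}\le n_{l}$, which the paper's proof asserts without comment.
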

\begin{proof}
Since
\[
{n \choose k}_{b}=\prod_{l=1}^{N-1}{n_{j} \choose k_{l}},\ \ {k \choose j}_{b}=\prod_{l=1}^{N-1}{k_{l} \choose j_{l}},\ \ S_{b}\left(k\right)=\sum_{l=0}^{N-1}k_{l}\text{ and }S_{b}\left(j\right)=\sum_{l=0}^{N-1}j_{l},
\]
we get
\[
\sum_{k=j}^{n}{n \choose k}_{b}{k \choose j}_{b}\left(-1\right)^{S_{b}\left(k\right)+S_{b}\left(j\right)}=\prod_{l=1}^{N-1}\sum_{k_{l}=j_{l}}^{n_{l}}{n_{j} \choose k_{l}}{k_{l} \choose j_{l}}\left(-1\right)^{k_{l}+j_{l}}=\prod_{l=1}^{N-1}\delta_{n_{l},j_{l}}=\delta_{n,j}.
\]

\end{proof}

\section{Pascal triangles}

In this last section, we look at the equivalent of Pascal triangles
that can be built from the $b-$ary binomial coefficients $\binom{n}{k}_{b}.$
Let us start with two examples, where we systematically replace each
null binomial entry with a dot (.) symbol to make the structure of
the triangle more apparent; remember that these null entries correspond
to the couples $\left(n,k\right)$ for which the addition of $k$
and $n-k$ is not carry-free in base $b.$
\begin{example}
In base\textbf{ $b=3,$ }the first 9 rows of the Pascal triangle are\textbf{
}
\[
T_{2}^{\left(3\right)}=\begin{array}{ccccccccccccccccc}
 &  &  &  &  &  &  &  & 1\\
 &  &  &  &  &  &  & 1 &  & 1\\
 &  &  &  &  &  & 1 &  & 2 &  & 1\\
 &  &  &  &  & 1 &  & . &  & . &  & 1\\
 &  &  &  & 1 &  & 1 &  & . &  & 1 &  & 1\\
 &  &  & 1 &  & 2 &  & 1 &  & 1 &  & 2 &  & 1\\
 &  & 1 &  & . &  & . &  & 2 &  & . &  & . &  & 1\\
 & 1 &  & 1 &  & . &  & 2 &  & 2 &  & . &  & 1 &  & 1\\
1 &  & 2 &  & 1 &  & 2 &  & 4 &  & 2 &  & 1 &  & 2 &  & 1
\end{array}
\]
Denote as 
\[
T_{1}^{\left(3\right)}=\begin{array}{ccccc}
 &  & 1\\
 & 1 &  & 1\\
1 &  & 2 &  & 1
\end{array}
\]
the first 3 top rows of this triangle and 
\[
*=\begin{array}{ccc}
. &  & .\\
 & .
\end{array}=\begin{array}{ccc}
0 &  & 0\\
 & 0
\end{array}
\]
the elementary reverse triangle built with zero entries, and notice
that
\[
T_{2}^{\left(3\right)}=\begin{array}{ccccc}
 &  & T_{1}^{\left(3\right)}\\
 & T_{1}^{\left(3\right)} & * & T_{1}^{\left(3\right)}\\
T_{1}^{\left(3\right)} & * & 2T_{1}^{\left(3\right)} & * & T_{1}^{\left(3\right)}
\end{array}.
\]
It appears that, with obvious notations,
\[
T_{2}^{\left(3\right)}=T_{1}^{\left(3\right)}\otimes T_{1}^{\left(3\right)}
\]
from which we deduce
\[
T_{m}^{\left(3\right)}=\left[T_{1}^{\left(3\right)}\right]^{\otimes m}.
\]
The operator $\otimes$ will be defined and discussed in the Proposition
\ref{prop:The-structure-of} below.
\end{example}

\begin{example}
In base 4, the first 16 rows of the binomial triangle $T_{4}^{\left(2\right)}$
are as follows.
\end{example}
{\tiny{}
\[
T_{4}^{\left(2\right)}=\begin{array}{ccccccccccccccccccccccccccccccc}
 &  &  &  &  &  &  &  &  &  &  &  &  &  &  & 1\\
 &  &  &  &  &  &  &  &  &  &  &  &  &  & 1 &  & 1\\
 &  &  &  &  &  &  &  &  &  &  &  &  & 1 &  & 2 &  & 1\\
 &  &  &  &  &  &  &  &  &  &  &  & 1 &  & 3 &  & 3 &  & 1\\
 &  &  &  &  &  &  &  &  &  &  & 1 &  & . &  & . &  & . &  & 1\\
 &  &  &  &  &  &  &  &  &  & 1 &  & 1 &  & . &  & . &  & 1 &  & 1\\
 &  &  &  &  &  &  &  &  & 1 &  & 2 &  & 1 &  & . &  & 1 &  & 2 &  & 1\\
 &  &  &  &  &  &  &  & 1 &  & 3 &  & 3 &  & 1 &  & 1 &  & 3 &  & 3 &  & 1\\
 &  &  &  &  &  &  & 1 &  & . &  & . &  & . &  & 2 &  & . &  & . &  & . &  & 1\\
 &  &  &  &  &  & 1 &  & 1 &  & . &  & . &  & 2 &  & 2 &  & . &  & . &  & 1 &  & 1\\
 &  &  &  &  & 1 &  & 2 &  & 1 &  & . &  & 2 &  & 4 &  & 2 &  & . &  & 1 &  & 2 &  & 1\\
 &  &  &  & 1 &  & 3 &  & 3 &  & 1 &  & 2 &  & 6 &  & 6 &  & 2 &  & 1 &  & 3 &  & 3 &  & 1\\
 &  &  & 1 &  & . &  & . &  & . &  & 3 &  & . &  & . &  & . &  & 3 &  & . &  & . &  & . &  & 1\\
 &  & 1 &  & 1 &  & . &  & . &  & 3 &  & 3 &  & . &  & . &  & 3 &  & 3 &  & . &  & . &  & 1 &  & 1\\
 & 1 &  & 2 &  & 1 &  & . &  & 3 &  & 6 &  & 3 &  & . &  & 3 &  & 6 &  & 3 &  & . &  & 1 &  & 2 &  & 1\\
1 &  & 3 &  & 3 &  & 1 &  & 3 &  & 9 &  & 9 &  & 3 &  & 3 &  & 9 &  & 9 &  & 3 &  & 1 &  & 3 &  & 3 &  & 1
\end{array}.
\]
}{\tiny \par}

Starting from the elementary triangles,
\[
T_{1}^{\left(4\right)}=\begin{array}{ccccccc}
 &  &  & 1\\
 &  & 1 &  & 1\\
 & 1 &  & 2 &  & 1\\
1 &  & 3 &  & 3 &  & 1
\end{array},\thinspace\thinspace*=\begin{array}{ccccc}
. &  & . &  & .\\
 & . &  & .\\
 &  & .
\end{array}=\begin{array}{ccccc}
0 &  & 0 &  & 0\\
 & 0 &  & 0\\
 &  & 0
\end{array},
\]
we notice that
\[
T_{2}^{\left(4\right)}=\begin{array}{ccccccc}
 &  &  & T_{1}^{\left(4\right)}\\
 &  & T_{1}^{\left(4\right)} & * & T_{1}^{\left(4\right)}\\
 & T_{1}^{\left(4\right)} & * & 2T_{1}^{\left(4\right)} & * & T_{1}^{\left(4\right)}\\
T_{1}^{\left(4\right)} & * & 3T_{1}^{\left(4\right)} & * & 3T_{1}^{\left(4\right)} & * & T_{1}^{\left(4\right)}
\end{array}
\]
and more generally
\[
T_{n}^{\left(4\right)}=\left[T_{1}^{\left(4\right)}\right]^{\otimes n}.
\]

These observations are now extended to an arbitrary value of the base
$b$.
\begin{prop}
\label{prop:The-structure-of}The structure of the triangle built
from the coefficients $\binom{n}{k}_{b}$ satisfies \end{prop}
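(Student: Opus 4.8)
The plan is to establish the single block recursion $T_m^{(b)} = T_1^{(b)} \otimes T_{m-1}^{(b)}$ and then iterate it. Everything is driven by one factorization of the coefficients, obtained by peeling off the most significant digit. For $0 \le k \le n < b^{m}$ write
\[
n = n_{m-1}\,b^{m-1} + \hat{n}, \qquad k = k_{m-1}\,b^{m-1} + \hat{k}, \qquad 0 \le \hat{n},\hat{k} < b^{m-1}.
\]
Then the product formula \eqref{eq:binomial coefficients} separates the rank-$(m-1)$ factor from the lower ranks and gives
\[
\binom{n}{k}_{b} = \binom{n_{m-1}}{k_{m-1}}\binom{\hat{n}}{\hat{k}}_{b}.
\]
First I would record this identity, which is the only arithmetic input and is immediate from \eqref{eq:binomial coefficients}, together with the observation (also from \eqref{eq:binomial coefficients}) that $\binom{\hat n}{\hat k}_b = 0$ whenever $\hat k > \hat n$.

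Next I would translate the factorization into the geometry of the centred layout, in which $\binom{n}{k}_{b}$ sits at vertical coordinate $n$ and horizontal coordinate $2k-n$. The digit splitting gives simultaneously
\[
n = n_{m-1}\,b^{m-1} + \hat{n}, \qquad 2k - n = b^{m-1}\bigl(2k_{m-1}-n_{m-1}\bigr) + \bigl(2\hat{k}-\hat{n}\bigr),
\]
so both coordinates pass through the same leading/lower split, and the lower offsets are confined to $0 \le \hat{n} < b^{m-1}$ and $|2\hat{k}-\hat{n}| < b^{m-1}$. Hence the leading pair $(n_{m-1},k_{m-1})$ selects a macro-cell occupying exactly the slot that $\binom{n_{m-1}}{k_{m-1}}$ holds in $T_1^{(b)}$, dilated by the factor $b^{m-1}$, while $(\hat{n},\hat{k})$ addresses the interior point in the position of $\binom{\hat{n}}{\hat{k}}_{b}$ in $T_{m-1}^{(b)}$. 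Because consecutive macro-cells are centred $2b^{m-1}$ apart and each spans only $\pm(b^{m-1}-1)$, they do not overlap, so the sub-triangle anchored at macro-position $(n_{m-1},k_{m-1})$ is precisely $\binom{n_{m-1}}{k_{m-1}}\,T_{m-1}^{(b)}$, which is the defining prescription of $T_1^{(b)} \otimes T_{m-1}^{(b)}$.

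It then remains to identify the gaps and to run the induction. Inside a macro-row $n_{m-1}=i$, the positions between two adjacent sub-triangles are exactly those with $k_{m-1} < i$ and $\hat{k} > \hat{n}$ (for which $k \le n$ still holds); by the vanishing noted above these all equal $0$, and a short count shows they fill a reverse triangle of height $b^{m-1}-1$, matching the symbol $*$ seen in the examples. This confirms $T_m^{(b)} = T_1^{(b)} \otimes T_{m-1}^{(b)}$. With the trivial base case, induction on $m$ upgrades this to $T_m^{(b)} = \bigl[T_1^{(b)}\bigr]^{\otimes m}$, once the associativity of $\otimes$ is in hand; this associativity I would deduce from the associativity of the digit decomposition $n = n_{m-1}b^{m-1} + \dots + n_{0}$, which lets one peel digits from either end and regroup.

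The step I expect to be the main obstacle is the bookkeeping of the second paragraph: turning the arithmetic factorization into a rigorous statement about the centred layout, namely that the common dilation factor $b^{m-1}$ places every scaled sub-triangle into the exact slot prescribed by $\otimes$, without overlap, with the reverse zero-triangles occupying precisely the complementary gaps and with the correct height. The factorization itself is effortless; making the self-similar picture into airtight coordinate statements, and fixing the definition of $\otimes$ so that associativity is genuinely available, is where the care lies.
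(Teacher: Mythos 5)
Your proposal is correct and follows essentially the same route as the paper's own proof: an induction on $m$ whose key step is the factorization $\binom{n}{k}_{b}=\binom{n_{m-1}}{k_{m-1}}\binom{\hat{n}}{\hat{k}}_{b}$ obtained by peeling off the most significant digit, identifying the leading factor with the macro-structure $T_{1}^{(b)}$ and the remaining product with copies of $T_{m-1}^{(b)}$. Your additional coordinate bookkeeping (non-overlap of the dilated blocks and the zero reverse-triangles filling the gaps) makes explicit what the paper leaves implicit, but it is the same decomposition and the same induction.
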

\begin{enumerate}
\item $T_{1}^{\left(b\right)}$ is made of the first $b$ rows of the usual
Pascal triangle;
\item for $m>1,$ $T_{m}^{\left(b\right)}$ is obtained from $T_{m-1}^{\left(b\right)}$
by the associative operation
\[
T_{m}^{\left(b\right)}=T_{m-1}^{\left(b\right)}\otimes T_{1}^{\left(b\right)},
\]
defined as 
\[
T_{m}^{\left(b\right)}=\begin{array}{ccccccc}
 &  &  & \binom{0}{0}T_{m-1}^{\left(b\right)}\\
\\
 &  & \binom{1}{0}T_{m-1}^{\left(b\right)} &  & \binom{1}{1}T_{m-1}^{\left(b\right)}\\
 & \iddots &  &  &  & \ddots\\
\binom{b-1}{0}T_{m-1}^{\left(b\right)} &  & \ldots & \binom{b-1}{j}T_{m-1}^{\left(b\right)} & \ldots &  & \binom{b-1}{b-1}T_{m-1}^{\left(b\right)}
\end{array}
\]
so that 
\[
T_{m}^{\left(b\right)}=\left[T_{1}^{\left(b\right)}\right]^{\otimes m};
\]

\item $T_{m}^{\left(b\right)}$ has $b^{m}$ rows since 
\[
\#rows\left(T_{m}^{\left(b\right)}\right)=b\times\#rows\left(T_{m-1}^{\left(b\right)}\right);
\]

\item the bottom row of $T_{m}^{b}$ has $b^{m}$ entries.\end{enumerate}
\begin{proof}
These properties are a direct consequence of (\ref{eq:binomial coefficients}).
Since properties (3) and (4) are elementary, only properties (1) and
(2) need to be verified and here we use induction.

(i) For $m=1$, $T_{1}^{\left(b\right)}$ contains the first $b$
rows of the triangle, made of coefficients $\binom{n}{k}_{b}$where
\[
0\le n\le b-1,
\]
so that the $b-$ary expression of $n$ consists of a single digit:$n=n_{0}$.
In this case, $\binom{n}{k}_{b}$ coincides with $\binom{n}{k}$ making
the first $b$ rows coincide with those of the Pascal triangle (see
Remark \ref{rem:remark 3.2})

(ii) Consider $T_{m}^{\left(b\right)}$ by assuming that properties
(1) and (2) hold for $T_{1}^{\left(b\right)},\dots T_{m-1}^{\left(b\right)}$.
Then, the $b^{m}$ elements are exactly the case
\[
0\le n\le b+\cdots+b^{m}-1,
\]
which implies that $n$ could have at most $m$ digits, namely
\[
n=n_{m-1}b^{m-1}+\cdots+n_{0}.
\]
Thus, 
\[
{n \choose k}_{b}={n_{m-1} \choose k_{m-1}}\underset{\text{Copy of }T_{m-1}^{\left(b\right)}}{\underbrace{\overset{m-2}{\underset{l=0}{\prod}}{n_{l} \choose k_{l}}}}.
\]
Since $0\le n_{m-1}\le b-1$, ${n_{m-1} \choose k_{m-1}}$ gives a
copy of $T_{1}^{\left(b\right)}$ while the rest of the product gives
a copy of $T_{m-1}^{\left(b\right)}$. Thus, by induction
\[
T_{m}^{\left(b\right)}=\left[T_{1}^{\left(b\right)}\right]^{\otimes m}.
\]

\end{proof}

\thanks{The work of the second author was partially supported by the iCODE
Institute, Research Project of the Idex Paris-Saclay. The authors
wish to thank V.H. Moll for his support and discussions.}

\end{document}